\mathchardef\mhyphen="2D
\newtheorem{theorem}{Theorem}
\newtheorem{lemma}[theorem]{Lemma}
\newcommand\commentout[1]{}
\newcommand\Def[1]{{\bf #1}}
\renewcommand{\H}{\mathcal{H}}
\newcommand{\ZZ}{\mathbb{Z}}
\newcommand{\RR}{\mathbb{R}}
\def\a{\mathbf{a}}
\def\0{\mathbf{0}}
\def\1{\mathbf{1}}
\def\x{\mathbf{x}}
\def\y{\mathbf{y}}
\def\d{\mathbf{d}}
\renewcommand\vec{\overrightarrow} 
\begin{document}

\title{Parking functions, Shi arrangements, and mixed graphs}  

\author[Beck]{Matthias Beck}
\address{Department of Mathematics, San Francisco State University, San Francisco, CA 94132, USA}
\email{mattbeck@sfsu.edu}

\author[Berrizbeitia]{Ana Berrizbeitia}
\address{Department of Mathematics, University of Iowa, Iowa City, IA 52242, USA}
\email{ana-berrizbeitia@uiowa.edu}

\author[Dairyko]{Michael Dairyko}
\address{Department of Mathematics, Iowa State University, Ames, IA 50011, USA}
\email{mdairyko@iastate.edu}

\author[Rodriguez]{Claudia Rodriguez}
\address{School of Mathematical and Statistical Sciences, Arizona State University, Tempe, AZ 85287, USA}
\email{Claudia.Rodriguez.3@asu.edu}

\author[Ruiz]{Amanda Ruiz}
\address{Department of Mathematics and Computer Science, University of San Diego, San Diego, CA 92110, USA}
\email{amruiz@hmc.edu}

\author[Veeneman]{Schuyler Veeneman}
\address{Department of Operations Research and Information Engineering, Cornell University, Ithaca, NY 14850, USA}
\email{sav63@cornell.edu}

\keywords{Parking function, Shi arrangement, hyperplane arrangement, mixed graph, parking graph, bijection}

\subjclass[2010]{Primary 05A19; Secondary 52C35.}


\date{7 September 2014}

\thanks{We thank Ricardo Cortez and the staff at MSRI for creating an ideal research environment at MSRI-UP, and Brendon Rhoades, Tom Zaslavsky, and three anonymous referees for helpful comments and suggestions.
This research was partially supported by the NSF through the grants DMS-1162638 (Beck) and DMS-1156499 (MSRI-UP REU), and by the NSA through grant H98230-11-1-0213. 
A portion of this work was completed while A.\ Ruiz was a postdoctoral fellow at Harvey Mudd College, supported in part by the NSF (DMS-0839966).}

\maketitle

\begin{abstract}
The \emph{Shi arrangement} is the set of all hyperplanes in $\RR^n$ of the form $x_j - x_k = 0$ or $1$ for $1 \le
j < k \le n$.  Shi observed in 1986 that the number of regions (i.e., connected components of the complement) of
this arrangement is $(n+1)^{n-1}$. An unrelated combinatorial concept is that of a \emph{parking function}, i.e.,
a sequence $(x_1, x_2, ..., x_n)$ of positive integers that, when rearranged from smallest to largest, satisfies
$x_k \le k$. (There is an illustrative reason for the term \emph{parking function}.) It turns out that the number
of parking functions of length $n$ also equals $(n+1)^{n-1}$, a result due to Konheim and Weiss from 1966. A
natural problem consists of finding a bijection between the $n$-dimensional Shi arragnement and the parking
functions of length $n$. Pak and Stanley (1996) and Athanasiadis and Linusson (1999) gave such (quite different)
bijections. We will shed new light on the former bijection by taking a scenic route through certain mixed graphs.
\end{abstract}


\section{Introduction}

Our goal is to draw (bijective) connections between three seemingly unrelated concepts; their names
form the title of our paper, and we start by introducing them one by one.


\subsection{Parking Functions}

Imagine a one-way street with $n$ parking spots and a cliff at its end. We'll give the first parking spot the number
1, the next one number 2, etc., down to the last one, number $n$. Initially they're all free, but
there are $n$ cars approaching the street, and they'd all like to park.
To make life interesting, every car has a parking preference, and we record the preferences in a
sequence; e.g., if $n=3$, the sequence $(2, 1, 1)$ means that the first car would like to park at
spot number 2, the second car prefers parking spot number 1, and the last car would also like to
park at number 1. The street is narrow, so there is no way to back up. Now each car enters the
street and approaches its preferred parking spot; if it is free, it parks there, and if not, it
moves down the street to the first available spot. We call a sequence a \Def{parking function} (of
length $n$) if all cars end up happily finding a parking spot, i.e., none fall off the cliff.
For example, the sequence $(2, 1, 1)$ is a parking function (of length 3), whereas the sequence
$(1,3,3,4)$ is not (see Figure~\ref{fig:nonparkingfunction}).

\begin{figure}[h]
\includegraphics[scale=.3]{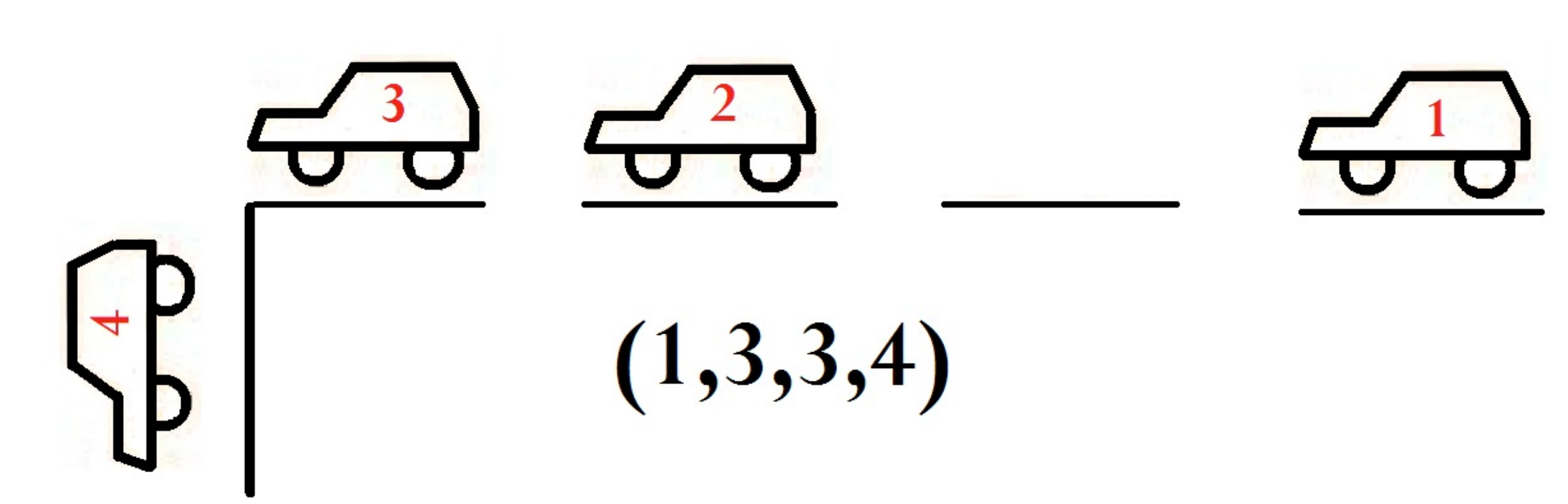}
\caption{A sequence that is not a parking function.}\label{fig:nonparkingfunction}
\end{figure}

The following enumerative result makes for a fun (nontrivial) exercise in any undergraduate
combinatorics class. The earliest reference we are aware of is \cite[Section 5]{pyke}, though its
explicit relation to parking functions first appeared in~\cite{konheimweiss}.
 
\begin{theorem}\label{thm:parkingfunctions}
There are precisely $(n+1)^{ n-1 }$ parking functions of length~$n$.
\end{theorem}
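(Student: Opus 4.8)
The plan is to prove this via an elegant circular argument (due to Pollak). First I would enlarge the picture: place $n+1$ parking spots, labeled $0, 1, \ldots, n$, around a circle, and allow each of the $n$ cars a preference drawn from $\{0, 1, \ldots, n\}$, so that there are exactly $(n+1)^n$ preference sequences $\a = (a_1, \ldots, a_n)$. The parking rule is the same as before, except that a car passing spot $n$ continues on to spot $0$; since there are now $n+1$ spots and only $n$ cars, every car parks successfully and exactly one spot is left empty.

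The heart of the argument is a $\ZZ/(n+1)\ZZ$-symmetry. For $c \in \ZZ/(n+1)\ZZ$ set $\a + c := (a_1 + c, \ldots, a_n + c)$, with entries reduced modulo $n+1$. I would check that running the circular parking process on $\a + c$ yields exactly the configuration obtained from the one for $\a$ by rotating every parked car forward by $c$ spots; in particular, if $\a$ leaves spot $s$ empty, then $\a + c$ leaves spot $s + c$ (modulo $n+1$) empty. Because adding a nonzero constant changes every coordinate, this action is free, so the $(n+1)^n$ sequences split into orbits of size exactly $n+1$, giving $(n+1)^{n-1}$ orbits.

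It remains to match orbits with parking functions, and this is the step that needs the most care. I would prove that a sequence leaves spot $0$ empty in the circular process if and only if it is a parking function in the original linear sense. The forward direction is immediate: a linear parking function has all preferences in $\{1, \ldots, n\}$ and fills spots $1, \ldots, n$ with no wraparound, so spot $0$ stays empty. For the converse, the key observation is that a spot which is empty at the end was free throughout the process; hence if spot $0$ is empty, no car could have preferred it (it would have parked there immediately) and no car could have passed it while wrapping around (it would have seen spot $0$ free), so in fact all preferences lie in $\{1, \ldots, n\}$ and no car ever wraps around — which is exactly the linear parking condition.

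Combining these facts, the rotation symmetry shows that each orbit contains exactly one sequence leaving spot $0$ empty, i.e., exactly one parking function. Therefore the number of parking functions of length $n$ equals the number of orbits, namely $(n+1)^{n-1}$. I expect the main obstacle to be making the rotation claim fully rigorous: one must verify that the greedy circular parking process genuinely commutes with the cyclic shift of preferences. This is intuitively clear but deserves a careful argument, for instance an induction on the cars in the order they enter the street.
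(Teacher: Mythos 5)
Your argument is correct and complete: it is the classical circular/cyclic-symmetry proof usually attributed to Pollak, and all the delicate points are handled properly --- in particular the freeness of the $\ZZ/(n+1)\ZZ$-action, the fact that a spot left empty at the end was never occupied at any earlier time (so a sequence leaves spot $0$ empty if and only if it is an honest linear parking function), and the equivariance of the greedy circular process under rotation of preferences. The paper itself does not prove Theorem~\ref{thm:parkingfunctions}; it defers to the references and only records Lemma~\ref{lem:parkingfunctionperm} (a sequence is a parking function iff it is componentwise dominated by some permutation of $(1,2,\dots,n)$) as the starting point of ``one possible proof.'' So your route is genuinely different from the one the paper gestures at: a proof built on Lemma~\ref{lem:parkingfunctionperm} would proceed by a direct count of sequences dominated by a permutation (e.g.\ via inclusion--exclusion or the cycle lemma), whereas your orbit-counting argument sidesteps that lemma entirely and gets the formula in one stroke from the symmetry of the $(n+1)$-cycle. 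What the paper's suggested route buys is precisely Lemma~\ref{lem:parkingfunctionperm}, which is needed later (in the proofs of Lemma~\ref{lem:feeder} and Theorem~\ref{thm:bijparkgraphfunction}); what your route buys is brevity and a clean explanation of \emph{why} the answer is a power of $n+1$. The one step you rightly flag --- that the circular parking process commutes with the cyclic shift --- does deserve the induction on cars you propose, but it presents no real difficulty: the inductive hypothesis that the set of occupied spots after $i-1$ cars for $\a+c$ is the rotate by $c$ of that for $\a$ immediately forces car $i$ to park at the rotated spot.
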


One possible proof of this theorem starts with the following equivalence, which we will need below:

\begin{lemma}\label{lem:parkingfunctionperm}
A sequence $\x \in \ZZ_{ >0 }^n$ is a parking function if and only if $\x$ is componentwise less than
or equal to some permutation of $(1, 2, \dots, n)$.
\end{lemma}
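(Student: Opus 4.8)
The plan is to prove the two implications separately, using the parking process for one direction and the non-decreasing characterization of parking functions for the other. Throughout I write a permutation as the sequence $(\pi_1,\dots,\pi_n)$, and I use the characterization (equivalent to the cliff/street definition, and the form stated in the abstract) that $\x$ is a parking function if and only if its non-decreasing rearrangement $y_1 \le y_2 \le \dots \le y_n$ satisfies $y_k \le k$ for all $k$. I will also invoke the equivalent counting form: $\x$ is a parking function iff $|\{i : x_i \le j\}| \ge j$ for every $j \in \{1,\dots,n\}$. The two are equivalent because $y_k \le k$ for all $k$ says exactly that at least $j$ of the entries are $\le j$ for every $j$, which is the fact I would verify first.

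For the forward implication, I would suppose $\x$ is a parking function and simply run the parking process. Since no car ever backs up, each car $i$ ends in a spot $\pi_i$ with $\pi_i \ge x_i$. Because every car parks and the occupied spots are distinct, the assignment $i \mapsto \pi_i$ is a bijection onto $\{1,\dots,n\}$, i.e.\ a permutation, and by construction it dominates $\x$ componentwise. This direction requires essentially no computation.

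For the reverse implication, I would suppose $x_i \le \pi_i$ for some permutation $\pi$ of $(1,\dots,n)$ and fix $j \in \{1,\dots,n\}$. Exactly $j$ of the values $\pi_1,\dots,\pi_n$ lie in $\{1,\dots,j\}$, and for each index $i$ with $\pi_i \le j$ we have $x_i \le \pi_i \le j$; hence at least $j$ of the entries of $\x$ are $\le j$. Since this holds for every $j$, the counting characterization shows $\x$ is a parking function. Equivalently, one can phrase this as the statement that sorting preserves componentwise domination, so the non-decreasing rearrangement of $\x$ is dominated entrywise by that of $\pi$, namely $(1,2,\dots,n)$, giving $y_k \le k$.

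I expect the difficulty here to be bookkeeping rather than any genuine obstacle. In the forward direction the only point that matters is that the process never moves a car to a lower-numbered spot, so the final configuration is automatically a permutation dominating the preferences. In the reverse direction the crux is the one-line counting step; if one instead prefers the sorting formulation, then ties must be handled with a fixed tie-breaking rule when recording the rearrangement permutation $\rho$, so that $x_i = y_{\rho(i)} \le \rho(i)$ holds with $\rho$ a genuine bijection. Neither point needs heavy machinery, so the main care is to state the chosen characterization of parking functions cleanly and apply it consistently across both directions.
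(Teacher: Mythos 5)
The paper itself never proves this lemma (it is stated as a known equivalence en route to Theorem~\ref{thm:parkingfunctions}), so I am judging your argument on its own merits. Your forward direction is complete and clean: running the parking process, each car ends at a spot no earlier than its preference, and since all $n$ cars occupy $n$ distinct spots the final assignment is a permutation dominating $\x$ componentwise. The reverse direction, however, has a gap. You correctly reduce ``$\x$ is dominated by a permutation'' to the counting condition that at least $j$ entries of $\x$ are $\le j$ for every $j$, but you then close by saying ``the counting characterization shows $\x$ is a parking function.'' Under the paper's operational definition (Section 1.1: all cars find a spot), the implication from the counting condition to ``every car parks'' is precisely the nontrivial half of the lemma --- the counting condition, the sorted condition $y_k\le k$, and domination by a permutation are all trivially interchangeable with one another, so assuming any one of them is equivalent to a parking function begs the question in this direction. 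As written, you prove the forward implication twice (once honestly, once via the assumed characterization) and the reverse implication not at all from the operational definition.

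The missing step is short and you should supply it. Suppose $\x$ satisfies the counting condition but some car fails to park. Then fewer than $n$ cars park, so some spot $j$ is never occupied (cars do not leave). Every car whose preference is at most $j$ parks at some spot in $\{1,\dots,j\}$, since spot $j$ is always available as a fallback; and none of them parks at $j$ itself, since $j$ is never occupied. Hence all cars with preference $\le j$ park at distinct spots in $\{1,\dots,j-1\}$, so there are at most $j-1$ such cars, contradicting the counting condition. (Alternatively: a permutation of $(1,\dots,n)$ is trivially a parking function because all preferences are distinct, and one can prove the parking process is monotone under componentwise decrease of preferences --- but that monotonicity also requires an argument, so the empty-spot count above is the more economical route.) With this paragraph added, your proof is correct.
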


This equivalent notion also explains why parking functions naturally appear in many contexts; these include tree inversions \cite{kreweras}, symmetric functions \cite{haimanparking}, Riemann--Roch theory for graphs (where
parking functions are also called \emph{reduced divisors} \cite{bakernorine}), Hopf algebras \cite{novellithibon}, chip-firing games (where parking functions go by the name of \emph{superstable configurations} \cite{holroydlevinemeszarosetal}), and vertex operators~\cite{dotsenko}.


\subsection{Shi Arrangements}

A \Def{hyperplane arrangement} is a finite collection of hyperplanes in some Euclidean space, i.e., sets of the form
\[
  \left\{ \x \in \RR^n : \, \a \, \x = b \right\} 
\]
for some $\a \in \RR^n \setminus \{ \0 \}$ and $b \in \RR$.
A famous example is the $n$-dimensional \Def{(real) braid arrangement} consisting of all hyperplanes of the form $x_j = x_k$
for $1 \le j < k \le n$; it has a natural connection to the symmetric group $S_n$, and indeed, the
braid arrangement forms a geometric bridge between various algebraic and combinatorial concepts.
A picture (in this case, Figure \ref{fig:braid}) is worth a thousand words.

\begin{figure}[htb]
\def\JPicScale{.8}
\def\bm{$}
\def\em{$}
\begin{center}
\input{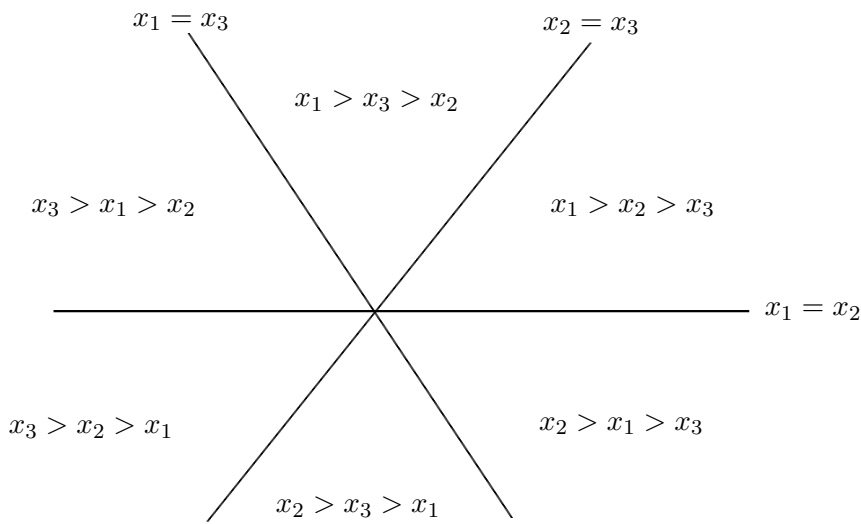}
\end{center}
\caption{The bijection between permutations and regions of the braid arrangement illustrated for $n=3$.}\label{fig:braid}
\end{figure}

The $n$-dimensional \Def{Shi arrangement} is a close relative to the braid arrangement: it consists of all hyperplanes of the form
\[
  x_j - x_k = 0
  \qquad \text{ and } \qquad
  x_j - x_k = 1
  \qquad \text{ for all } 1 \le j < k \le n \, .
\]
A \Def{region} of the hyperplane arrangement $\H$ is a maximal connected component of $\RR^n \setminus
\bigcup \H$. The following result was first proved by Shi~\cite{shi}.

\begin{theorem}\label{thm:shi}
The $n$-dimensional Shi arrangement has precisely $(n+1)^{ n-1 }$ regions.
\end{theorem}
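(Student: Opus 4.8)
The plan is to prove the theorem bijectively, matching the regions of the Shi arrangement~$\H$ with the parking functions of length~$n$ and then invoking Theorem~\ref{thm:parkingfunctions}. First I would fix a distinguished \emph{base region} $R_0$, namely the one on which $x_1 > x_2 > \cdots > x_n$ and $x_1 - x_n < 1$; equivalently, $0 < x_j - x_k < 1$ for all $j < k$, so that $R_0$ lies on the side $x_j - x_k > 0$ of each hyperplane $x_j - x_k = 0$ and on the side $x_j - x_k < 1$ of each hyperplane $x_j - x_k = 1$. Every point of $\RR^n \setminus \bigcup \H$ lies in a unique region, so it makes sense to measure an arbitrary region $R$ against $R_0$ by recording which hyperplanes separate the two.

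Next I would introduce the Pak--Stanley labeling, assigning to each region $R$ the vector $\lambda(R) = (a_1, \dots, a_n) \in \ZZ_{>0}^n$ with
\[
  a_i \ = \ 1 + \#\{\, j < i : \ x_j - x_i = 0 \text{ separates } R \text{ from } R_0 \,\} + \#\{\, j > i : \ x_i - x_j = 1 \text{ separates } R \text{ from } R_0 \,\} \, ,
\]
where a hyperplane \emph{separates} two regions if they lie on opposite sides of it. This map is well defined by construction, and one checks on the $n=2$ example that it already recovers all three parking functions $(1,1)$, $(1,2)$, $(2,1)$. The main content of this step is to prove that $\lambda(R)$ is always a parking function: here Lemma~\ref{lem:parkingfunctionperm} is the right tool, and I would show that the coordinates of $\lambda(R)$, once sorted, are dominated by $(1, 2, \dots, n)$, the point being that on any fixed region the number of separating hyperplanes able to raise the labels is constrained by the linear order in which the coordinates $x_i$ appear.

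Finally I would prove that $\lambda$ is a bijection onto the set of parking functions of length~$n$, which together with Theorem~\ref{thm:parkingfunctions} yields exactly $(n+1)^{n-1}$ regions. For injectivity one reconstructs, from a label, the side of every hyperplane on which $R$ sits, and checks that this reconstruction is forced; for surjectivity one must exhibit, for each parking function, a genuinely nonempty region carrying that label. I expect surjectivity to be the main obstacle: the inequalities dictated by a prescribed parking function must be shown to be mutually consistent---no directed cycle of strict order relations can arise---so that the corresponding intersection of open half-spaces is nonempty. As a self-contained alternative that bypasses the bijection, one could instead compute the characteristic polynomial of $\H$ by Athanasiadis's finite-field method, obtaining $\chi_\H(t) = t\,(t-n)^{n-1}$, and then read off the region count through Zaslavsky's theorem as $(-1)^n \chi_\H(-1) = (n+1)^{n-1}$; but the bijective route is more in keeping with the aims of this paper.
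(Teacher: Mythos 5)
Your overall strategy is legitimate and is in fact the one this paper is built around: the statement is quoted here from Shi's work, and the paper's own justification for the count is the composite bijection of Theorems~\ref{thm:graphtoshi} and~\ref{thm:bijparkgraphfunction} (Shi regions $\leftrightarrow$ parking graphs $\leftrightarrow$ parking functions), whose forward direction is exactly the Pak--Stanley labeling $\lambda$ you describe. Your setup of $\lambda$ is correct, your base region matches the paper's central region, and your $n=2$ check is right. The step showing $\lambda(R)$ is a parking function via Lemma~\ref{lem:parkingfunctionperm} is also genuinely fillable along the lines you indicate.

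The gap is that the two load-bearing claims --- injectivity and surjectivity of $\lambda$ --- are asserted rather than proved, and neither is routine. For injectivity you say that ``one reconstructs, from a label, the side of every hyperplane on which $R$ sits, and checks that this reconstruction is forced''; that reconstruction is precisely the hard content of this paper, carried out by the explicit algorithm in the proof of Theorem~\ref{thm:bijparkgraphfunction} (which rebuilds the parking graph, i.e.\ the full list of hyperplane sides, from the parking function, using Lemmas~\ref{lem:feeder}--\ref{lem:sourceorderdown}). It cannot be dispatched in a sentence. Surjectivity is worse: you correctly identify it as the main obstacle but do not resolve it, and the standard Pak--Stanley argument sidesteps it by invoking the known equality of cardinalities --- which is circular here, since the cardinality of the set of Shi regions is exactly what Theorem~\ref{thm:shi} asserts. (The paper escapes this circle because its algorithm produces, for \emph{every} parking function, a parking graph $P(\x)$, and Theorem~\ref{thm:graphtoshi} shows $\psi(P(\x))$ is a genuine full-dimensional region.) Your fallback via the finite-field method and Zaslavsky's theorem is a valid independent route --- indeed it is the proof the paper cites from Headley and Athanasiadis --- but the computation of $\chi_\H(t)=t(t-n)^{n-1}$ is itself the entire content of that proof and is not carried out. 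As written, the proposal is a correct plan with its hardest steps still open.
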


One proof of this theorem (first given in \cite{headley}, see also \cite{athanasiadisfinitefieldmethod}) is through
the \emph{characteristic polynomial} of the Shi arrangement, which carries more information than the
number of its regions \cite{zaslavskythesis}; e.g., with it one can also prove that the $n$-dimensional
Shi arrangement has precisely $(n-1)^{ n-1 }$ \emph{bounded} regions.

Hyperplane arrangements are ubiquitous in several areas of mathematics and form a research area in its own right.
Some highlights include connections between combinatorics (starting with \cite{zaslavskythesis}), matroid theory \cite{orientedmatroids}, topology (starting with \cite{hattori}), commutative algebra \cite{orliksolomon}, and algebraic geometry \cite{saito}.
We refer to \cite{orlikterao,stanleyhyparr} for further study.


\subsection{Parking Graphs}

A \Def{mixed graph} $G$ is an amphibian: $G$ is between a graph and a directed graph, in the sense that $G$
may contain both undirected and directed edges (see, e.g., \cite{hararypalmer}). 
Mixed graphs have various applications, e.g., to scheduling problems.
Here we are only interested in a fairly special class of mixed graphs.
As with directed graphs, we define the \Def{in-degree} of a vertex $v$ of $G$ to be the number of directed edges pointing into $v$; the \Def{out-degree} of $v$ is the number of
directed edges pointing away from $v$. 

A \Def{parking graph} $P$ is a mixed graph with vertex set $[n] := \left\{ 1, 2, \dots, n \right\} $
whose underlying graph is the complete graph $K_n$ and whose edges
satisfy what we will call the \emph{source-sink condition}. To describe it, we use the following
nomenclature for $1 \le j < k \le n$:
\begin{itemize}
\item a directed edge $j \leftarrow k$ in $P$ is a \Def{ down edge};
\item a directed edge $j \rightarrow k$ in $P$ is an \Def{ up edge};
\item an undirected edge $jk$ in $P$ is a \Def{downish edge}.
\end{itemize}
(See Figure \ref{fig:example} for examples.)
One reason for the last terminology is that we will associate to each parking graph $P$ a directed
graph $\vec P$ which, in addition to the directed edges of $P$, also contains each undirected edge
$jk$ of $P$ as a directed edge $j \leftarrow k$.
For example, the mixed graph in Figure \ref{fig:example} gives rise to a coherently oriented 3-cycle. 

\begin{figure}[htb]
\begin{center}
\includegraphics[scale=.4]{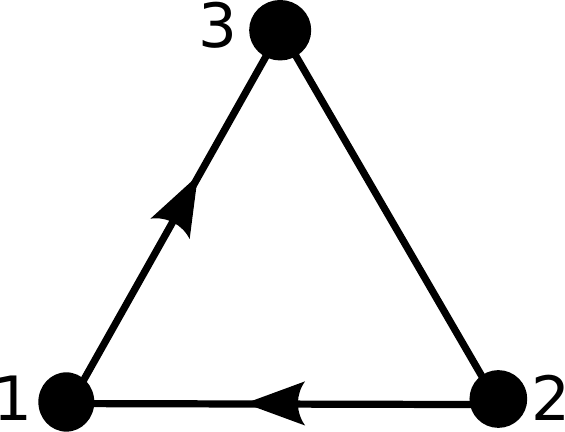}
\caption{A down ($1 \leftarrow 2$), up ($1 \rightarrow 3$), and downish ($23$) edge.}\label{fig:example}
\end{center}
\end{figure}

The \Def{source-sink condition} says that $\vec P$ is acyclic, i.e., it contains no coherently oriented cycles,
and for any triangle $G$ (complete subgraph on 3 vertices) of $P$ that has both a down and a downish edge, the source (i.e., the
vertex with in-degree 0) and sink (the vertex with out-degree 0) of $\vec G$ cannot be connected by a downish edge. 

\begin{theorem}\label{thm:parkinggraph}
There are precisely $(n+1)^{ n-1 }$ parking graphs on the vertex set~$[n]$. 
\end{theorem}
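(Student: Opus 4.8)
The plan is to exhibit an explicit bijection $\Phi$ between the regions of the $n$-dimensional Shi arrangement and the parking graphs on $[n]$, and then to invoke Theorem~\ref{thm:shi}. Given a region $R$, pick any $\x \in R$; since $R$ meets none of the hyperplanes $x_j - x_k = 0$ or $1$, for each pair $j<k$ exactly one of $x_j < x_k$, or $0 < x_j - x_k < 1$, or $x_j - x_k > 1$ holds, and this trichotomy is constant on $R$. I would encode these three cases as an \emph{up}, a \emph{downish}, respectively a \emph{down} edge $\{j,k\}$, and call the resulting mixed graph $\Phi(R)$.

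Well-definedness and injectivity are the easy part. The three cases above are exactly the three realizable sign patterns of $R$ on the line $t = x_j - x_k$ (the pattern $t<0$ and $t>1$ being impossible), so $\Phi(R)$ records precisely the sign vector of $R$; distinct regions have distinct sign vectors, whence $\Phi$ is injective. The organizing observation for everything that follows is that, under this encoding, $\vec P$ orients each edge from the endpoint of smaller coordinate to the endpoint of larger coordinate: for an up edge we have $x_j<x_k$ and $\vec P$ keeps $j \to k$, while for a down or downish edge we have $x_j > x_k$ and $\vec P$ uses $k \to j$. Thus $\vec P$ is nothing but the comparability digraph of the total order of the coordinates of $\x$.

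Next I would check that $\Phi(R)$ is genuinely a parking graph. Acyclicity of $\vec P$ is immediate from the previous paragraph, since a total order has no coherently oriented cycles. For the triangle part of the source-sink condition, consider a triangle carrying both a down and a downish edge and suppose its $\vec G$-source $s$ and sink $t$ were joined by a downish edge; writing the three governing constraints (each of the form ``$x_a - x_b$ lies in $(-\infty,0)$, $(0,1)$, or $(1,\infty)$'') and adding the two along the path from $s$ to $t$ through the middle vertex forces $x_s - x_t$ into an interval disjoint from the one demanded by the downish edge $\{s,t\}$, a contradiction. This is a finite case check over triples $i<j<k$.

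The heart of the proof---and the step I expect to be the main obstacle---is surjectivity: given a parking graph $P$, I must produce $\x$ realizing all the prescribed relations, i.e., show the system $\{\, x_j < x_k \text{ (up)},\ 0 < x_j - x_k < 1 \text{ (downish)},\ x_j - x_k > 1 \text{ (down)} \,\}$ is feasible. This is a system of difference constraints, feasible if and only if an associated weighted digraph $D$ has no directed cycle of nonnegative total weight. I would build $D$ so that its nonnegative-weight edges are exactly $\vec P$ (up and downish edges getting weight $0$, down edges weight $1$), together with one reverse edge of weight $-1$ encoding the upper bound $x_j - x_k < 1$ of each downish edge. Since $\vec P$ is acyclic, every would-be violating cycle must use at least one of these reverse edges, balanced by down edges in order to stay nonnegative. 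The crux is then to show that a \emph{minimal} violating cycle must be a triangle, at which point it is exactly the configuration forbidden by the source-sink condition; I would prove this by taking a shortest bad cycle and using acyclicity of $\vec P$ to find a chord that splits it into strictly shorter closed walks, one of which is again bad, contradicting minimality. Once no bad cycle exists, a longest-path potential on $D$ furnishes the desired point $\x$, completing the bijection and, with Theorem~\ref{thm:shi}, the count.
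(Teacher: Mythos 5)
Your route is the same one the paper takes: it proves this count by exhibiting a bijection $\psi$ between parking graphs and Shi regions (Theorem~\ref{thm:graphtoshi}) and invoking Theorem~\ref{thm:shi}, and your $\Phi$ is exactly the inverse of the paper's $\psi$. Your injectivity argument and your verification that $\Phi(R)$ satisfies the source-sink condition are fine. The problem sits exactly where you predicted, in surjectivity, and it is not just a detail to be filled in: the claim that a minimal nonnegative-weight cycle of $D$ must be a triangle is \emph{false} for the weighted digraph $D$ alone, so no chord-splitting argument that uses only acyclicity of $\vec P$ and the weights can establish it. Concretely, list four vertices in increasing coordinate order as $v_1,v_2,v_3,v_4$, let the edges $v_1v_2$ and $v_3v_4$ have forward weight $0$, let $v_2v_3$ be down (forward weight $1$), let $v_1v_4$ be downish (reverse edge of weight $-1$), and let both chords $v_1v_3$ and $v_2v_4$ be up (forward weight $0$, no reverse edge). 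The $4$-cycle $v_1\to v_2\to v_3\to v_4\to v_1$ has weight $0$ and is bad, but there is no bad triangle: each chord can be traversed only forward, and the only sub-triangles that close up, namely $v_1\to v_3\to v_4\to v_1$ and $v_1\to v_2\to v_4\to v_1$, both have weight $-1$.

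What saves the theorem is the vertex labeling, which your sketch never invokes at this step. An up edge forces the smaller label to have the smaller coordinate, while a down or downish edge forces the smaller label to have the larger coordinate; in the configuration above this yields the cyclic chain of strict inequalities $\mathrm{label}(v_2)<\mathrm{label}(v_4)<\mathrm{label}(v_1)<\mathrm{label}(v_3)<\mathrm{label}(v_2)$, so the configuration cannot occur in any mixed graph on $[n]$. This is precisely the content of the paper's argument for the well-definedness of $\psi$: it isolates the same four-vertex obstruction (a downish edge whose endpoints sandwich, in coordinate order, the endpoints of a down edge; see Figure~\ref{fig:submixedgraph}) and then runs a case analysis on the type of the remaining edges, using the label inequalities, to exhibit a triangle violating the source-sink condition. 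To complete your proof you would need to replace the purely weight-theoretic chord argument by this label-sensitive case analysis (or something equivalent); as written, the crux step does not go through.
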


Both the notion of a parking graph and Theorem \ref{thm:parkinggraph} seem to be new, though close relatives of parking graphs can be found in \cite{hopkinsperkinson}.
We will give their motivation next.


\subsection{A natural question}

Theorems \ref{thm:parkingfunctions} and \ref{thm:shi} beg the problem of finding a bijection between the parking functions of length $n$ and the regions of the $n$-dimensional Shi arrangement.
The first such bijection, due to Pak and Stanley \cite{stanleyshiproceedings}, recursively labels the regions of the Shi arrangement with parking functions from the inside out, 
starting by giving the label $(1, 1, \dots, 1)$ to the ``central'' region $\left\{ \x \in \RR^n : \, x_n + 1
> x_1 > x_2 > \dots > x_n \right\}$,
and giving a rule how the labels change as one crosses any of the Shi hyperplanes.
A different bijection, due to Athanasiadis and Linusson \cite{ath}, constructs a diagram of nonnesting arcs on $[n]$ from a region of the Shi arrangement in $\RR^n$,
and each diagram in turn determines a parking function.

Our goal is to give a third bijection by way of parking graphs, hence proving Theorem \ref{thm:parkinggraph}: we will
exhibit a bijection between Shi regions and parking graphs and a bijection between parking graphs and parking
functions. This implies, of course, a bijection between Shi regions and parking functions, and its forward
direction turns out to be equivalent to the Pak--Stanley bijection. (Pak and Stanley ``only'' gave
an injection from the set of Shi regions in $\RR^n$ to the set of parking functions of length $n$
and then appealed to the fact that both sets are equinumerous.)

Hopkins and Perkinson \cite{hopkinsperkinson} recently extended the Pak--Stanley bijection to \emph{bigraphic arrangements}, using mixed graphs as a similar
intermediate tool. Our construction is not identical but equivalent---our parking graphs correspond to Hopkins--Perkinson's \emph{Shi-admissable
orientations} of a complete graph, and our source-sink condition is equivalent to their condition of ``having no bad cycles." At any rate, our approach seems more direct (which should not be surprising, since Hopkins and Perkinson proved a more general result).
Even more recent work of Backman also essentially contains parking graphs; his definition adds a $q$-rooted spanning tree~\cite[Lemma 5.6]{backman}.


\subsection{Cayley's formula}

As a historical aside, the arguably most famous counting instance with answer $(n+1)^{ n-1 }$ is the number of labeled trees with $n+1$ vertices.
Thus a similar natural question concerns possible bijections between labeled trees and, say, parking functions. The oldest such bijection
we are aware of is due to Sch\"utzenberger \cite{schutzenbergerparking}; the arguably simplest bijection is due to Foata and Riordan \cite{foatariordan} and illustrates that bijections between labeled trees and parking
functions are generally easier than those between parking functions and regions of the Shi arrangements: The Foata--Riordan bijection starts with encoding a labeled tree by its \emph{Pr\"ufer code}, a recursively constructed sequence
containing the label of the vertex incident to the lowest-labeled leaf (which then gets removed to yield the recursion; this procedure stops when a single edge is left).\footnote{Pr\"ufer's idea \cite{pruefer} immediately
gives a bijection between labeled trees with $n+1$ vertices and sequences of length $n-1$ containing numbers in $[n+1]$, thus confirming Cayley's formula.}
Foata and Riordan use the following map, which they attribute to Henry O.\ Pollak, from the set of all parking functions of length $n$ to the set of all Pr\"ufer codes encoding labeled trees with $n+1$ vertices:
\begin{equation}\label{eq:pollak}
  \left( x_1, x_2, \dots, x_n \right) \mapsto \left( x_2 - x_1 , x_3 - x_2 , \dots, x_n - x_{n-1} \right) \bmod n+1 \, .
\end{equation}
This map can be inverted if $x_1$ is known independently, and the Foata--Riordan proof shows that for any Pr\"ufer code, there exists a unique $x_1$ that gives an
inverse of \eqref{eq:pollak} yielding a parking function.



\section{The Bijections}

A directed complete graph $G$ (also called a \emph{tournament}) contains a cycle if and only if there is a triangle subgraph in $G$ that is a cycle. As a consequence,
 it suffices to limit acyclicity in the source-sink condition to triangles. 

Now we tackle the actual bijections. We start with a map from parking graphs to regions of the Shi
arrangement: given a parking graph $P$ on the vertex set $[n]$, we define
\begin{equation}\label{eq:defpsi}
  \psi(P) := \left\{ \x \in \RR^n : \, 
               \begin{array}{rl}
                       x_j - x_k \ge 1 & \text{ if } \ j \leftarrow k \ \text{ is down, } \\
                 0 \le x_j - x_k \le 1 & \text{ if } \ j k \ \text{ is downish, } \\
                       x_j - x_k \le 0 & \text{ if } \ j \rightarrow k \ \text{ is up }
               \end{array}
             \right\} .
\end{equation}
Thus $\psi(P)$ is a potential region of the $n$-dimensional Shi arrangement. We will now show that it is
an honest region, i.e., full-dimensional, and that every such region comes from a parking graph.
Figure \ref{fig:3dimex} shows the case $n=3$.

\begin{figure}[htb]
\def\JPicScale{1}
\def\bm{$}
\def\em{$}
\begin{center}
\input{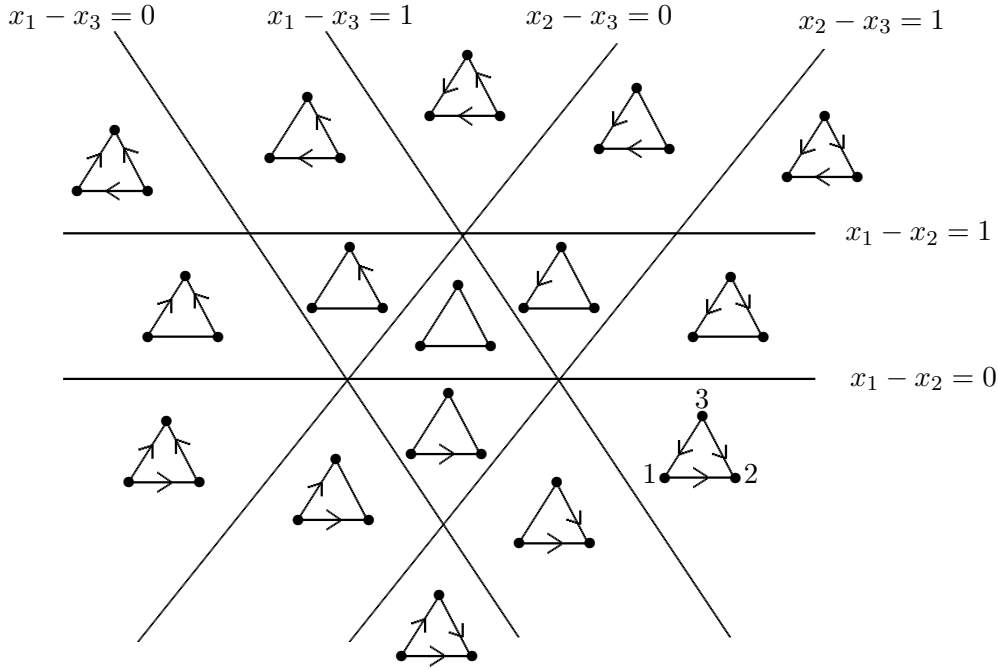}
\end{center}
\caption{The bijection between parking graphs on three vertices (all labeled as in the graph in the bottom right)  and regions of the 3-dimensional Shi arrangement.}\label{fig:3dimex}
\end{figure}

\begin{theorem}\label{thm:graphtoshi}
The map $\psi$ gives a bijection between the parking graphs on the vertex set $[n]$ and the regions of
the $n$-dimensional Shi arrangement.
\end{theorem}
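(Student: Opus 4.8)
The plan is to reduce the statement to a purely combinatorial feasibility question about the linear system defining $\psi(P)$. First observe that the regions of the $n$-dimensional Shi arrangement are in natural bijection with the \emph{feasible sign vectors}: a point avoiding all Shi hyperplanes satisfies, for each pair $j<k$, exactly one of $x_j - x_k < 0$, $0 < x_j - x_k < 1$, or $x_j - x_k > 1$, and conversely a choice of one such relation per pair cuts out an open convex set that, when nonempty, is precisely one region. The definition \eqref{eq:defpsi} matches the three edge types (up, downish, down) with these three relations, so $\psi$ is tautologically injective: distinct parking graphs select distinct sign vectors and hence have disjoint image interiors. It therefore remains to prove two things: that $\psi(P)$ is full-dimensional (equivalently, that its strict inequalities are simultaneously satisfiable) whenever $P$ satisfies the source--sink condition; and conversely that every region arises this way, i.e.\ that the mixed graph $P_R$ obtained by reading off the sign vector of a region $R$ satisfies the source--sink condition. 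The latter is surjectivity, and it amounts to showing that any labeling \emph{violating} the source--sink condition has empty interior.

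I would dispatch this necessity direction geometrically. Orienting each edge as in $\vec P$ turns every inequality of \eqref{eq:defpsi} into $x_{\mathrm{head}} \ge x_{\mathrm{tail}}$, strictly and with a gap exceeding $1$ for a down edge. Hence a coherently oriented cycle in $\vec P$ forces a cyclic chain $x_{v_1} \le x_{v_2} \le \cdots \le x_{v_1}$; this collapses $\psi(P)$ onto a proper subspace, and onto the empty set as soon as the cycle contains a down edge, so acyclicity of $\vec P$ is necessary. For the triangle clause, suppose a triangle on $\{s,m,t\}$ has both a down and a downish edge and that $\vec P$ makes $s$ its source and $t$ its sink; acyclicity forces $x_s \le x_m \le x_t$, so every pairwise gap is at most $x_t - x_s$. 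If moreover the source--sink edge $\{s,t\}$ is downish, then $x_t - x_s < 1$, so no two of the three coordinates differ by more than $1$; but the down edge of the triangle demands exactly such a gap, a contradiction. Both clauses are thus necessary, establishing surjectivity.

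For the harder sufficiency direction I would invoke the standard criterion for strict difference constraints: a finite system $\{x_a - x_b > c_{ab}\}$ is satisfiable if and only if the weighted digraph with an arc $b \to a$ of weight $c_{ab}$ has no directed cycle of nonnegative total weight (equivalently, by a transposition theorem of Gordan/Motzkin type, no nonnegative combination of the constraints yields a contradiction $0 > c$ with $c \ge 0$). Writing the strict inequalities of $\psi(P)$ in this form, a down edge contributes one arc of weight $+1$, an up edge one arc of weight $0$, and a downish edge a forward arc of weight $0$ together with a backward arc of weight $-1$; a directed cycle then has total weight $(\#\,\text{down arcs}) - (\#\,\text{backward downish arcs})$. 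The goal becomes: under the source--sink condition, no directed cycle has nonnegative weight. A cycle using no backward downish arc is a coherently oriented cycle of $\vec P$, excluded by acyclicity; so a putative nonnegative (``bad'') cycle uses a backward downish arc, and by nonnegativity at least one down arc. I would take a shortest bad cycle and reduce it to a triangle: for three consecutive vertices $u \to v \to w$, the chord $\{u,w\}$ splits the cycle, and comparing the weight contributions against minimality forces a shorter bad cycle unless the cycle is already a triangle---at which point the triangle visibly has a down and a downish edge with its source and sink joined by a downish edge, i.e.\ is exactly a source--sink violation.

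The main obstacle is this last reduction. When the chord $\{u,w\}$ is downish both orientations are available, and summing the weights of the two resulting sub-cycles (their shared chord contributing $0 + (-1) = -1$) against minimality gives an immediate contradiction; the delicate case is when the chord is a single-orientation up or down edge, where only one sub-cycle can be formed and one must argue more carefully---tracking the sign of the chord's weight and, if needed, choosing a different consecutive triple along the cycle---to still produce a shorter nonnegative cycle. Once the base case is pinned to triangles, what remains is the finite explicit check that a nonnegative triangle is precisely a violation of one of the two source--sink clauses, which dovetails with the geometric necessity argument above and closes the bijection.
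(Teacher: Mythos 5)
Your framing of the problem, the injectivity remark, and the necessity half of the argument are all correct and essentially match the paper's treatment of $\psi^{-1}$: a coherent cycle of $\vec P$ forces $x_j > x_j$, and a triangle with a down edge whose source and sink are joined by a downish edge forces a gap that is simultaneously less than $1$ and greater than $1$. Recasting the sufficiency half as the no-nonnegative-cycle criterion for strict difference constraints is also legitimate --- it is in effect the Hopkins--Perkinson route, whose ``no bad cycles'' condition the paper notes is equivalent to the source--sink condition. The difficulty is that the step you yourself flag as ``the main obstacle'' is where the entire content of the theorem sits, and the repair you gesture at (``choosing a different consecutive triple along the cycle'') provably does not suffice on its own. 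Consider a $4$-cycle $v_1\to v_2\to v_3\to v_4\to v_1$ with arc weights $1,0,-1,0$ (a down arc, a weight-$0$ arc, the backward arc of a downish edge, a weight-$0$ arc), whose two chords are up edges $v_1\to v_3$ and $v_4\to v_2$. Its total weight is $0$, yet every consecutive triple fails your local test: for the two triples whose cycle-arcs sum to $+1$ the chord points in the shortcut direction with weight $0$ and the shortcut cycle has weight $-1$, while for the two triples whose cycle-arcs sum to $-1$ the chord points the other way and the resulting triangle has weight $-1$. No consecutive triple yields a shorter nonnegative cycle or a nonnegative triangle.

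What kills this configuration is not the weights but the vertex labels: the down arc forces $v_2<v_1$, the downish edge forces $v_3<v_4$, and the two up-edge chords force $v_1<v_3$ and $v_4<v_2$, giving the impossible chain $v_2<v_1<v_3<v_4<v_2$. So any completion of your reduction must inject the label order (up edges increase labels, down and downish edges decrease them) into the cycle analysis, and for longer cycles this is a genuine additional argument rather than bookkeeping --- this is the missing idea. The paper avoids the issue by linearizing first: since $\vec P$ is an acyclic tournament, its in-degree sequence is a permutation of $(0,1,\dots,n-1)$, and ordering the coordinates accordingly satisfies every weak inequality in \eqref{eq:defpsi} strictly; with the order fixed, the only possible infeasibility among the remaining gap constraints is a down-edge pair nested inside a downish-edge pair, which produces a specific $4$-vertex sub-mixed graph, and a three-case analysis on its remaining edge (downish, down, or up --- the last case exploiting the labels exactly as above) shows it always violates the source--sink condition. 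You should either adopt that linearization or prove a lemma that handles the single-orientation chord case using the labels; as written, the sufficiency half of the bijection is not established.
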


\begin{proof}
For a region $R$ of the $n$-dimensional Shi arrangement, we define (by a slight abuse of notation but in
light of what's to come) $\psi^{ -1 }(R)$ to be the mixed graph one obtains by reversing the
recipe given by \eqref{eq:defpsi}. Our goal is to show that both $\psi$ and $\psi^{ -1 }$ are well-defined
maps from the set of all parking graphs on the vertex set $[n]$ to the set of all regions of the $n$-dimensional Shi
arrangement and back. Since $\psi$ and $\psi^{ -1 }$ are inverses by construction, this will prove Theorem~\ref{thm:graphtoshi}.

It is slightly easier to see that $\psi^{ -1 }$ is well defined, so let's start with that: namely, we
need to show that, for any region $R$, the mixed graph $\psi^{ -1 }(R)$ satisfies the source-sink
condition.  If  $\overrightarrow{\psi^{ -1 }(R)}$ contains a cycle, then for some $j<k<m$ the region $R$ satisfies either
\[\begin{array}{rcl} \left\{ \begin{array}{r}x_j-x_k>0\\x_k-x_m>0\\x_m-x_j>0\end{array} \right\} &\text{ or }& \left\{
\begin{array}{r}x_j-x_k<0\\x_k-x_m<0\\x_m-x_j<0 \end{array} \right\} . \end{array}\]
The first set of inequalities gives rise to the contradiction $x_j>x_j$ and the second to $x_j<x_j$. 

If  $\psi^{ -1 }(R)$ violates the downish part of the source-sink condition, then the defining inequalities for $R$ include both $0
\le x_j - x_k \le 1$ (where $k$ is the source and $j$ the sink of some triangle in $\overrightarrow{\psi^{ -1 }(R)}$) and $x_j - x_k \ge 1$ (see Figure \ref{fig:sourcesinkcond}); but a region has nontrivial interior, so no such $R$ can exist.

\begin{figure}[htb]
\begin{center}\def\svgwidth{3cm}
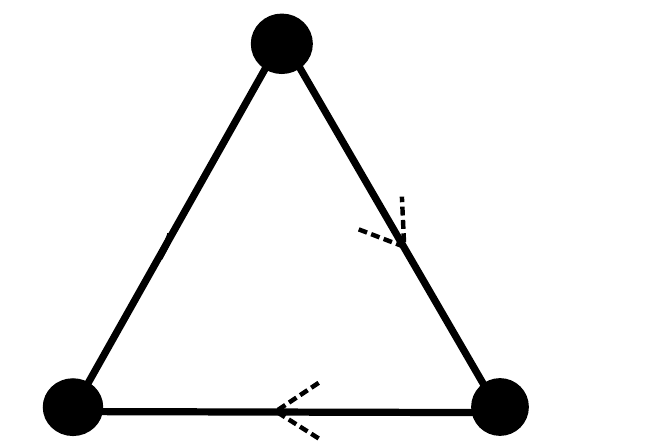
\end{center}
\caption{Violation of the source-sink condition; here $k<m<j$  and at least one of the two potential  down edges is present.}\label{fig:sourcesinkcond}
\end{figure}

To see that $\psi$ is well defined, we need to show, given a parking graph $P$, that $\psi(P)$ has
nontrivial interior. Let's look at the in-degree sequence of $\vec P$; since $\vec P$ is acyclic, the in-degree sequence is a permutation $\left(
\sigma(0), \sigma(1), \dots, \sigma(n-1) \right)$ of the vector $(0, 1, \dots, n-1)$. Note that this
in-degree sequence also means that $j \to k$ is an  up edge if and only if $\sigma(j-1) < \sigma(k-1)$.
Now choose $\x \in \RR^n$ whose coordinates  satisfy
\[
  x_{ \sigma^{ -1 } (0) + 1 } < x_{ \sigma^{ -1 } (1) + 1 } < \dots < x_{ \sigma^{ -1 } (n-1) + 1 } \, . 
\]
We claim that $\x$ is in the interior of $\psi(P)$.
Indeed, $\x$ satisfies all inequalities $x_j \ge x_k$ in \eqref{eq:defpsi} strictly, by construction. In words,
$\x$ respects both the inequality constraints stemming from an  up edge and those stemming from a
down/downish edge. What remains to be shown is that there is no potential contradiction from the
interplay of the constraints stemming from a down and a downish edge. But the only way such a
contradiction could occur is when
\[
  \begin{array}{ccccccccc}
  0 & < & x_{ \sigma^{ -1 } (i) + 1 } &                             & - &                             & x_{ \sigma^{ -1 } (m) + 1 } & < & 1 \\
    &   &                             & x_{ \sigma^{ -1 } (j) + 1 } & - & x_{ \sigma^{ -1 } (k) + 1 } &                     & > & 1
  \end{array}
\]
for some $m <i,~k < j $ and $ \sigma^{ -1 } (i) \le \sigma^{ -1 } (j)< \sigma^{ -1 } (k) \le \sigma^{ -1 } (m) $. This means that $P$ has a subgraph $G$ of
the form in Figure \ref{fig:submixedgraph} (where $m$ and $k$ or $j$ and $i$ could coincide). 
\begin{figure}[htb]
\begin{center}\def\svgwidth{3cm}
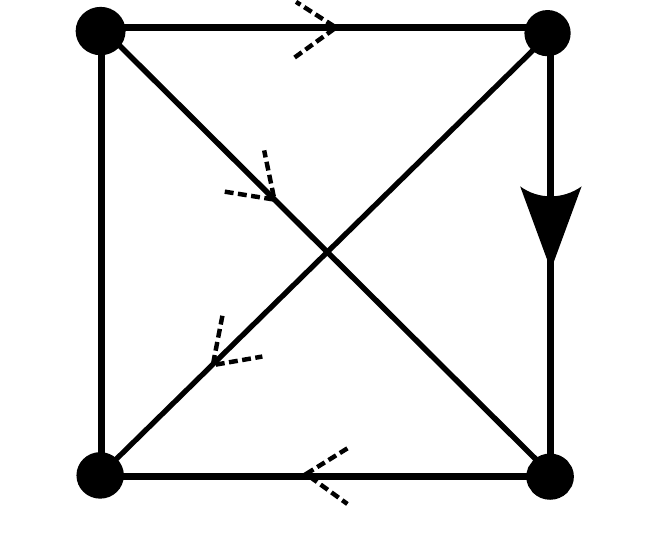
\end{center}
\caption{A sub-mixed graph violating the source-sink condition. Vertex $i$ is the source and vertex $m$ is the sink. The edges between $i$ or $m$ and $j$ or $k$ may be up, down, or downish edges in the direction of the dashed arrows.}\label{fig:submixedgraph}
\end{figure}
%
In the subgraph $G$, vertex $i$ is the source, $m$ is the sink, the edge $im$ is downish, and $k\leftarrow j$ is a  down edge.  We
will show that $G$ violates the source-sink condition. Notice that
$k$ is the sink of  the triangle on vertices $i,j,$ and $k$, and that $j$ is the source of the triangle on vertices $j,k$ and $m$.
If  the edge $ki$  is downish, then the triangle on vertices $i,j$, and $k$ violate the  source-sink condition.   If the edge $k\leftarrow i$ is a  down edge, then the triangle on vertices $k, i$, and $m$  violates the source-sink condition.  Finally, if  $i\rightarrow k$ is an  up edge, then $m<i<k<j$. Therefore the edge $m\leftarrow j$ must be  a  down edge, thus violating the source-sink condition for the triangle on vertices $i,j,$ and $m$.   
\end{proof}

Our second map is between parking graphs and parking functions: given a parking graph $P$, let
$\d(P)$ be its in-degree sequence, and define
\[
  \phi(P) := \d(P) + \1
\] where $\bf 1$ $\in\RR^n$ is a vector all of whose coordinates are 1. 

\begin{theorem}\label{thm:bijparkgraphfunction}
The map $\phi$ gives a bijection between the parking graphs on the vertex set $[n]$ and the parking functions of length $n$.
\end{theorem}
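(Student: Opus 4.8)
The plan is to show first that $\phi$ maps parking graphs into parking functions, and then to upgrade this to a bijection by a counting-plus-injectivity argument.

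\emph{Well-definedness.} Fix a parking graph $P$ and compare its in-degree sequence $\d(P)$ with the in-degree sequence of the associated digraph $\vec P$. The underlying graph of $\vec P$ is $K_n$ and every edge of $\vec P$ is oriented, so $\vec P$ is a tournament; by the source-sink condition it is acyclic, hence transitive, so it has a unique linear order and its in-degree sequence is a \emph{permutation} of $(0,1,\dots,n-1)$. Adding $\1$ gives a permutation of $(1,2,\dots,n)$. Since $\vec P$ retains every directed edge of $P$ and, on top of that, orients each downish edge, the in-degree of any vertex in $\vec P$ is at least its in-degree in $P$; thus $\phi(P)=\d(P)+\1$ is componentwise $\le$ a permutation of $(1,\dots,n)$. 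As $\phi(P)\in\ZZ_{>0}^n$, Lemma~\ref{lem:parkingfunctionperm} shows that $\phi(P)$ is a parking function.

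\emph{Reduction to injectivity.} By Theorem~\ref{thm:graphtoshi} together with Theorem~\ref{thm:shi}, there are exactly $(n+1)^{n-1}$ parking graphs on $[n]$, and by Theorem~\ref{thm:parkingfunctions} there are exactly $(n+1)^{n-1}$ parking functions of length $n$. These sets are finite and of the same size, so any injection between them is automatically a bijection (and this simultaneously proves Theorem~\ref{thm:parkinggraph}). It therefore suffices to recover $P$ from $\d(P)$.

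\emph{Reconstruction.} I would reconstruct $P$ by peeling off vertices in the linear order of $\vec P$. The useful structural fact is that deleting the \emph{sink} $s$ of $\vec P$ leaves the in-degrees of all other vertices unchanged: every edge at $s$ points toward $s$ in $\vec P$, so in $P$ each such edge is either an up edge into $s$, a down edge into $s$, or a downish edge, none of which contributes to the in-degree of the other endpoint. Hence, once the sink and the types of its incident edges are read off, the remaining mixed graph is again a parking graph (the source-sink condition is inherited by induced sub-mixed-graphs) whose in-degree sequence is $\d(P)$ restricted to $[n]\setminus\{s\}$, and we may recurse.

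\emph{Main obstacle.} The hard part is precisely the disentangling of down versus downish edges. A downish edge contributes $0$ to every in-degree of $P$, exactly as the ``missing head'' of a down edge does, so $\d(P)$ by itself does not see the down/downish distinction, and injectivity cannot follow formally from well-definedness; indeed the sink need not be the vertex of largest in-degree, so even identifying $s$ requires more than reading off $\d(P)$. The source-sink condition must be used in an essential way: for a fixed acyclic orientation it should force a \emph{unique} placement of the downish edges realizing a prescribed in-degree sequence (morally, a downish edge can never ``shortcut'' across a down edge). Concretely, I expect to pin down the sink $s$ and the down/downish types of its $n-1-d_s$ downish edges to larger-indexed vertices by a triangle case analysis of the kind used in the proof of Theorem~\ref{thm:graphtoshi} (see Figure~\ref{fig:submixedgraph}), showing that any other choice would create a forbidden source-sink configuration and hence that the reconstruction is forced at every step.
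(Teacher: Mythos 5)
Your well-definedness argument is correct and essentially identical to the paper's. Your reduction of bijectivity to injectivity via counting is legitimate and non-circular: Theorem~\ref{thm:graphtoshi} is proved independently of this theorem, so combining it with Theorems~\ref{thm:shi} and~\ref{thm:parkingfunctions} does give that both sets have cardinality $(n+1)^{n-1}$, and an injection between equinumerous finite sets is a bijection. This is a genuinely different (and in principle lighter) route than the paper, which never invokes the counts and instead constructs an explicit inverse. The structural observations in your reconstruction step are also sound as far as they go: deleting the sink of $\vec P$ does preserve the remaining in-degrees, and the source-sink condition is inherited by induced sub-mixed-graphs.

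The gap is that the injectivity step --- which is the entire content of the theorem --- is never actually proved. You correctly identify the two obstacles (the sink of $\vec P$ is not readable from $\d(P)$, since a downish edge and the tail of a down edge contribute identically, namely zero, to in-degrees; and the down/downish types of the sink's incident edges must be disentangled), but you then only write that you ``expect'' a triangle case analysis to force the reconstruction. That expectation is the theorem. Nothing in your text rules out two distinct parking graphs sharing an in-degree sequence; you would need to prove that for each admissible in-degree sequence the source-sink condition pins down a unique acyclic orientation together with a unique assignment of downish edges, and this is a global constraint, not a single local triangle check (already for $n=3$ and $\d=(1,0,0)$ one of the two candidate graphs must be excluded by the downish clause). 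The paper resolves exactly this difficulty by a different device: it gives an explicit algorithm building $P(\x)$ from a parking function $\x$ (alternating ``up steps'' and ``down steps''), introduces the source priority vector $s(\x)$, and proves through Lemmas~\ref{lem:feeder}--\ref{lem:sourceorderdown} that the algorithm terminates, outputs a graph with $\d(P(\x))=\x-\1$, and satisfies the source-sink condition; those lemmas are where the disambiguation you are hoping for actually happens. Until you supply an argument of comparable substance for uniqueness of the reconstruction, the proof is incomplete.
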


\begin{proof}
To show that $\phi$ is well defined, note once more that, given a parking graph $P$, the in-degree sequence of $\vec{P}$ is a
permutation of $(0,1,\ldots,n-1)$, and thus  $\d(P)$ is component-wise less than or equal to this permuted vector;  this implies that $\d(P)+\1$ is a parking function.

To show that $\phi^{ -1 }$ exists, we provide an algorithm that constructs a parking graph $P(\x)$ for the parking function $\x$ and show that the algorithm yields $\phi^{-1}$. Starting with the vertex set $[n]$, the algorithm determines when to add up edges from a particular vertex and when to add down edges. In contrast with the function
$\phi$, the algorithm is not as easy to define nor understand. We start with a detailed example which will shed some light into the process behind the algorithm.\\

\noindent
{\it Example.} To see the algorithm in practice, we will construct $P(3,1,1,2)$, the parking graph associated to the parking function $(3,1,1,2)$. 
The output of the algorithm must be a parking graph with vertex set $\{1,2,3,4\}$ and in-degree sequence $\d(P)=(2,0,0,1)$.
 There are many graphs with such an in-degree sequence.  Figure \ref{fig:algexnon} shows two different labeled complete mixed graphs on four vertices. The graph on the left violates the source-sink condition: triangle $\{1,2,4\}$ violates the downish part of the source-sink condition, and triangle $\{2,3,4\}$ creates a cycle. The graph on the right is the unique parking graph associated to $(3,1,1,2)$. 

\begin{figure}[htb]
\begin{center}
\def\svgwidth{3cm}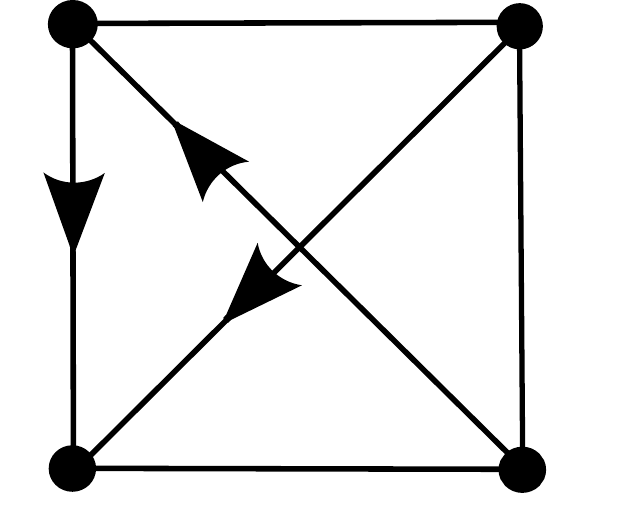
\hspace{2cm}
\def\svgwidth{3cm}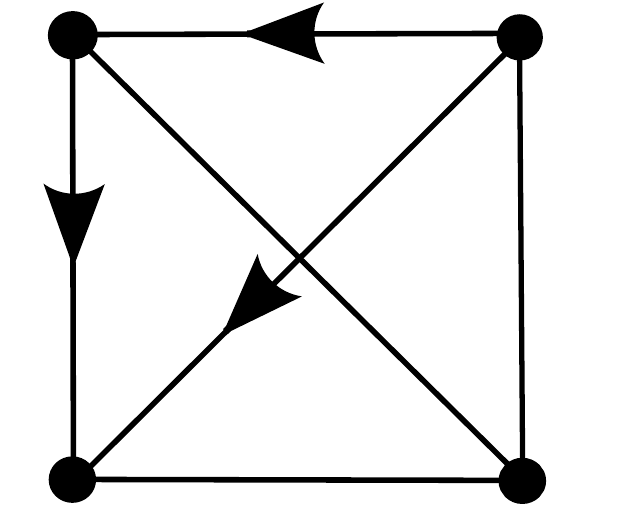
\end{center}
\caption{Two graphs with the same in-degree; the graph on the right is $P(3,1,1,2)$.}\label{fig:algexnon}
\end{figure}
 
So how can we create an algorithm that yields an actual parking graph? We begin the process by analyzing the entries of the input with smallest value. By virtue of being a
parking function, the input will always have at least one entry with a value of $1$. In our example, we have two such entries: the ones corresponding to vertex $2$ and vertex
$3$. These will be the two vertices with in-degree $0$ in our graph. Suppose the first up edge introduced was $2 \rightarrow 4$. Then the edge $34$ must be downish. There is
also a downish edge $23$ since both have in-degree $0$. Then the triangle $\{2,3,4\}$ creates a cycle, as seen on the left graph of Figure  \ref{fig:algexnon}. Therefore, we instead begin by creating an up edge $3 \rightarrow 4$ as seen in the first graph in Figure \ref{fig:algex}. In general, in order to avoid creating cycles, up edges will always emerge from the available vertex corresponding to the entry with highest index and lowest value. 
This process is formalized as the \emph{up step} of the algorithm below.

\begin{figure}[htb]
\begin{center}
\def\svgwidth{3.5cm}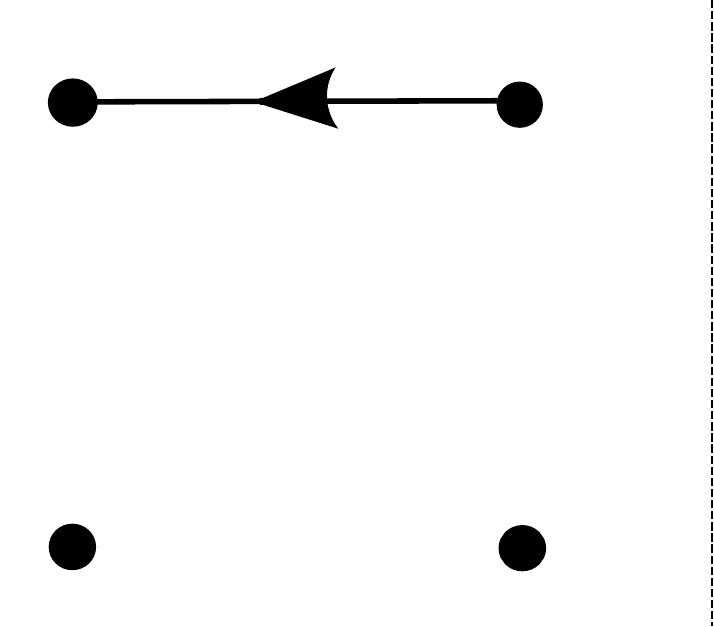
\def\svgwidth{3.5cm}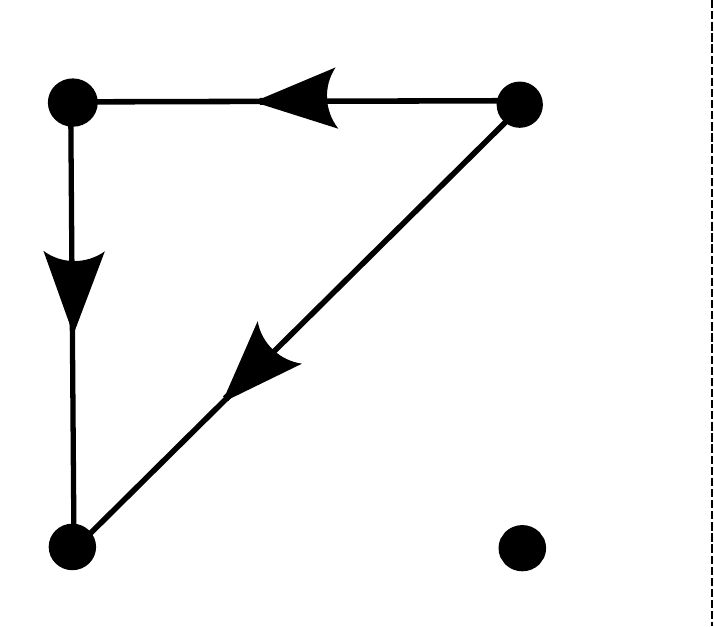
\def\svgwidth{3.35cm}\input{algexample3112.pdf_tex}
\end{center}
\caption{The algorithm in practice: building the parking graph  $P(3,1,1,2)$.}\label{fig:algex}
\end{figure}

So we now have a graph where vertex $4$ has in-degree 1, as desired. Since vertices $2, 3$ and $4$ all have the correct in-degree, we no longer introduce up edges. Vertex $1$ must have in-degree $2$, so we need to introduce two down edges from vertices 2, 3, or 4. 
%
%
%
%
If the down edge were $1 \leftarrow 2$, then either the edge between $1$ and $4$ or the edge between $1$ and $3$ will be downish.  If $14$ is a  downish edge, then the triangle
$\{1,2,4\}$ would violate the source-sink condition, as $4$ would be the source and $1$ would be the sink. This can also be seen in the left graph of Figure \ref{fig:algexnon}. An analogous argument applies if $13$ was the downish edge. It is
therefore necessary to have the  down edges $4 \leftarrow 1$ and $3 \leftarrow 1$ as seen in the middle graph in Figure \ref{fig:algex}. In general, in order to avoid violating
the downish part of the source-sink condition, down edges will always emerge from the available vertices with the corresponding entries with the lowest in-degree. This process
is formalized as the \emph{down step} of the algorithm below.

%

Each of the vertices has the desired in-degree, and so we add all other edges as downish edges, finally yielding $P(3,1,1,2)$.

We now formalize the ideas introduced in this example with an algorithm:

\begin{eqnarray*}
&{}& \text{Input: parking function } \x \in \ZZ_{ > 0 }^n \\
&{}& \y := \x - (1, 1, \dots, 1) \\
&{}& (\star) \text{ If there exists } y_k = 0 \text{ then } \\
&{}& \qquad \ \, j := \max \left\{ k : \, y_k = 0 \right\} \\
&{}& \qquad \left. \begin{array}{l}
            \text{introduce } j \to k \\
            y_k := y_k - 1
            \end{array} \right\} \text{ for all } k > j \text{ with } y_k > 0 \qquad [{\bf up \hspace{1.5mm} step}] \\
&{}& \qquad \ \, y_j := y_j - 1 \\
&{}& \qquad \ \, y_k := y_k - 1 \text{ for all } y_k < 0 \\
&{}& \qquad \ \, \text{go to } (\star) \\ 
&{}& \text{Else if there exists } y_k > 0 \text{ then } \\
&{}& \qquad \ \, \text{choose minimal } y_j \text{ such that there exists }k<j \text{ with } y_k>0 \text{ and } k\nleftarrow j\\
&{}& \qquad \left. \begin{array}{l}
           \text{introduce } k \leftarrow j \\
            y_k := y_k - 1
            \end{array} \right\} \text{ for all } k < j \text{ with } y_k > 0 \qquad [{\bf down \hspace{1.5mm} step}] \\
&{}& \qquad \ \, \text{go to } (\star) \\ 
&{}& \text{Else [all } y_k < 0 {]} \\ 
&{}& \qquad \ \, \text{introduce remaining edges as undirected and stop } \\
&{}& \text{Output: mixed graph } P(\x), \text{ \Def{source priority vector} } s(\x):=(y_1,\ldots,y_n)
\end{eqnarray*}

Some additional terminology will be useful in order to talk about the algorithm:
We will  refer to $\hat\y:=\x-(1,\ldots,1)$ as the \Def{initial} $\y$. At each iteration of the algorithm, we will refer to the current value of the $j^{th}$ entry of $y$ as $y_j$.  If $j<k$, we say the $j$-entry is
\Def{before} the $k$-entry of $\y$, and the $k$-entry is \Def{after} the $j$-entry of $\y$.
If $y_j=0$ and $j=\max\{k:y_k=0\}$ we call $j$ the \Def{feeder} of the corresponding up step, or an \Def{up feeder}. Similarly, $j$ is the
\Def{feeder} of a down step, or a \Def{down feeder}, if a  down edge $k\leftarrow j$ is introduced. 
We will say $j$ is a \Def{down feeder candidate} for $k$ whenever the algorithm is at a down step, $y_k>0$, $k<j$ and $k \nleftarrow j$.
In Lemma \ref{lem:sourceorderup}, we will show the source priority vector $s(\x)$ associates a total order  to the vertices of $P(\x)$ that determines  when each vertex is the feeder of an up step. If $|s_i(\x)|>|s_j(\x)|$, then $i$ became a feeder of an up step before ~$j$ did. 

We must check that this algorithm both runs without failing and indeed returns the parking graph associated to the input, the parking function $\x$. Note that the algorithm always starts at an up step because the input $\x$ is a parking function. Hence, there exists $k$ such that $x_k=1$, so $\hat{y}_k$ will be 0. This ensures, in particular, that the algorithm will always start. 
If there exists a $y_k$  with $y_k=0$, an up step will always occur. So the up step of the algorithm is well defined. However, the down step of the algorithm could cause problems. How do we know that when there exists a $y_k>0$ that triggers a down step, there also exists a down feeder candidate, $j$, for $k$?
 We answer this question in the following lemma.

\begin{lemma}\label{lem:feeder}
If the algorithm is at a down step, then for some $y_k>0$, there exists a down feeder candidate for $k$, namely, $j$. Furthermore, if $j$ is the down feeder for $k$, then $y_j<0$.
\end{lemma}

\begin{proof}
Suppose, by way of contradiction, that a down feeder candidate did not exist. By assumption, at least one $y_k>0$ and for every other $l \ne k$, either $y_l>0$ or $y_l<0$, but there is no entry with $y_l=0$ since the algorithm is at a down step. Let $k_0=\min\{k: y_k>0\}$. If there is no down feeder candidate, then either there is no $j$ with $k<j$ for any $y_k>0$, in which case $k_0=n$, or for all $j$ with $k_0<j$ we have that $k_0 \leftarrow j$ has already been introduced. 

In the first case, $k_0=n$, $y_{k_0}=y_n>0$ and $y_l<0$ for all $1\le l<n$. It follows that at some point in the algorithm, each of the $l$ must have fed $n$ at the step when they were 0. Therefore, $n$ has been fed $n-1$ times during an up step, and yet $y_n \ge 1$. Since each up step redefines $y_n:=y_n-1$, if we trace back the steps of the algorithm, it follows that for the initial $\hat \y$, $\hat y_n \ge 1+(n-1)$. Therefore, $x_n = n+1$, which means $\x$ was not a parking function by Lemma \ref{lem:parkingfunctionperm}, so this case is not possible.

In the second case, for all $j$ with $k_0<j$, $k_0\leftarrow j$ has already been introduced, so $k_0$ has been fed by every entry after $k_0$. Furthermore, for all $l<k_0$, $y_l<0$, so at some point in the algorithm $l$ fed $k_0$ during an up step, so $k_0$ has been fed by every entry before $k_0$. So again, $k_0$ has been fed $n-1$ times, so by the same argument as in the first case, tracing our steps back would give us that the initial input was not a parking function. So this case is also not possible. 

This proves the existence of a down feeder candidate at a down step. We now show that if $j$ is the feeder, i.e., the minimal $y_j$ amongst the candidates, then $y_j<0$. 
 
Recall that there are no 0 entries in $\y$ since we are at a down step. Again, by way of contradiction, suppose that there are no negative candidates for feeders. This means that for any $p$ with $y_p>0$, each $q>p$ with $y_q<0$ has already fed $p$; that is, the edge $p\leftarrow q$ has already been introduced in a previous  down step.  Furthermore, the algorithm gives us that for any $r<p$ with $y_r<0$, $r$ has fed $p$ in a previous up step.  Let $k$ be the total number of positive entries in the current down step. Then for any of the $k$ possible entries $y_p$ with $y_p>0$, $p$ has been fed by all $n-k$ indices with negative entries.  

Since the input $\x$ is a parking function and $y_p>0$,  we have $1\le y_p\le n-1-(n-k)=k-1$.  
Since we have gone through at least $n-k$ iterations of the algorithm, we can trace the algorithm back $n-k$ steps to see that for the initial $\hat\y$, $1+(n-k) \le\hat y_p\le
n-1$. Therefore, $n-k+2\le x_p\le n$. Since there are exactly $k$ entries $x_p$ satisfying this inequality, it follows that there are only $n-k$ entries of $\x$ that could be
less than $n-k+2$, and by Lemma  \ref{lem:parkingfunctionperm}, $\x$ cannot be a parking function. This contradiction proves Lemma~\ref{lem:feeder}. 
\end{proof}

Lemma \ref{lem:feeder} shows that the algorithm is well defined, i.e., it runs without failing. Since the input is a finite $n$-tuple with non-negative entries, at some point in
the algorithm each of these entries will eventually become 0 and later negative after a series of up steps. Therefore, the final output of the algorithm $s(\x)$ is indeed an
$n$-tuple consisting of all negative entries. Furthermore, the following lemma tells us that these entries are all distinct and that the $n$-tuple gives us meaningful
information about the desired parking function $P(\x)$.

\begin{lemma}\label{lem:sourcepri}
 The source priority vector $s(\x)$ is a total order that indicates the reverse order in which indices are the feeder of an up step.
\end{lemma}
\begin{proof}
The algorithm begins with an up step with the rightmost index $j$ of $\hat \y$ with $\hat y_j=y_j=0$ as the feeder. This up step of the algorithm instructs us to do as follows: for each $l>j$ with $y_l>0$, introduce the edge $j \rightarrow l$; redefine $y_j:=y_j-1$; and redefine $y_l:=y_l-1$. Since there are no negative entries in $\hat \y$, we need not do anything else in this step. So the new entries of $\y$ are $y_j=-1$, $y_l \ge 0$ for all $l \ne j$. Therefore, $j$ is the unique entry with $y_j=-1$ after the first step of the algorithm.

Suppose we continue with the algorithm and are now at another up step. Let $j$ be the rightmost index of $\y$ with $y_j=0$. The up step instructs us to change the entries of $\y$ as follows:
redefine $y_j:=-1$; for every $y_l>0$  with up edge $j \rightarrow l$ introduced, redefine $y_l := y_l -1$; for any $y_k<0$, redefine $y_k:=y_k-1$; and for any $i\ne j$ such
that $y_i=0$, no new adjacent edges are introduced, so $y_i$ remains $0$. Thus for $i,j,k,$ and $l$ playing the above roles, the new entries of $\y$ are $y_j=-1$, $y_l \ge 0$,
$y_k \le -2$, and $y_i=0$. Therefore, the $j$-entry is the unique entry with $y_j=-1$. 

It follows inductively that $s(\x)$ gives a total ordering of $\{-1,\ldots,-n\}$, and since $y_k<0$ gets redefined to be $y_k-1$ only during up steps, this ordering signifies the order in which $j$ was the feeder of an up step.
\end{proof}

It is now possible to understand exactly when the algorithm will create an up edge $i \rightarrow j$ in terms of the source priority vector. This is summarized in the following lemma.

\begin{lemma}\label{lem:sourceorderup}
Let $i<j$. Then $|s_i(\x)|>|s_j(\x)|$ if and only if $i\rightarrow j$ in $P(\x)$. 
\end{lemma}
\begin{proof}
Suppose $i\rightarrow j$ in $P(\x)$. Then $i\rightarrow j$ is introduced when $y_i=0$ and $y_j>0$. Therefore, $y_i$ becomes $-1$ before $y_j$ becomes $-1$. Hence $|s_i(\x)|>|s_j(\x)|$.   

Suppose $i\nrightarrow j$. Then, during the iteration when $y_i=0$, it must have been true that $y_j<0$. Therefore $|s_i(\x)|<|s_j(\x)|$. 
This proves Lemma \ref{lem:sourceorderup}.
\end{proof}

It is also useful to talk about the order in which the algorithm introduces down edges in terms of the source priority vector. 

\begin{lemma}\label{lem:sourceorderdown}
Let $j$ and $k$ both be down feeder candidates for $i$ at a certain down step. Then $j$ becomes the down feeder for $i$ at this step if and only if $|s_j(\x)|>|s_k(\x)|$.
\end{lemma}
\begin{proof}
Lemma \ref{lem:feeder} gives us that if $j$ is a down feeder for $i$ then $y_j<0$. Since the down feeder is the minimum $y_j$ amongst the candidates, it follows that $j$ is the down feeder for $i$ if and only if $y_j<0$ and $y_j<y_k$. This is true if and only if $|s_j(\x)|>|s_k(\x)|$.
\end{proof}

It remains to show that if $\x$ is the input of the algorithm, then the output, $P(\x)$, is indeed a parking graph (i.e., $P(\x)$ satisfies the source-sink condition), and $P(\x)$ has in-degree sequence $\x -1$. Therefore, the algorithm yields $\phi^{-1}$.

To show that $\d(P(\x))=\x-1$, we will follow an entry $y_k$ through the algorithm. The initial value $\hat y_k=x_k-1$ is the desired in-degree of vertex
$k$ of $P(\x)$.  If $y_k>0$, then  a directed edge is introduced towards $k$ if and only if $y_k$ is reduced by $1$.
Therefore, when $y_k=0$, vertex $k$ of $P(\x)$ has in-degree $x_k-1$; and once $y_k\le 0$, the only edges incident with $k$ that are introduced are edges that are directed away
from $k$ or that are undirected, neither of which contributes to the in-degree. Therefore, $\d(P(\x))=\x-1$, as desired.

To show $P(\x)$ is acyclic, suppose $1\le i<j<k\le n$ and $i\rightarrow j \rightarrow k$. When the edge $j\rightarrow k$ is introduced, $y_j=0$ while $y_k>0$. 
 When $i\rightarrow j$  is introduced, then $y_i=0$ and $y_j>0$. Therefore $y_k>0$, so $i\rightarrow k$ is
also introduced. Thus there is no cycle $i\rightarrow j\rightarrow k\rightarrow i$. Now suppose $i\rightarrow k$. When the edge $i\rightarrow k$ is introduced, $y_i=0$, $y_k>0$ and for all $l>i$, $y_l\ne0$. In particular, $y_j\ne0$. If $y_j>0$, then  $i\rightarrow j$ is
introduced in the same step.  Suppose $y_j<0$; then an up step previously occurred in which $j$ was the source and $y_k$ was positive, so $j\rightarrow k$ was introduced. Therefore there are no cycles involving $i\rightarrow k$.

Lastly, we show that $P(\x)$ satisfies the downish part of the source-sink condition, and this will complete the proof that $P(\x)$ is a parking graph with the correct in-degree sequence.
Consider a triangle $\Delta$ of $P(\x)$ with vertices $i,j,k$ with $i<j<k$. If $P(\x)$ has either an up edge or a  down edge between each pair of vertices, then there is no violation of the source-sink condition and we're done. 
So suppose for some pair of vertices of $\Delta$, $P(\x)$ has a downish edge between them. We consider three cases.

Case 1: There is a downish edge $ij$. Suppose, by way of contradiction, that $\Delta$ violates the downish part of the source-sink condition. Then $\Delta$ must have an  up edge $j \rightarrow k$ and a  down edge $i \leftarrow k$. Since $j<k$, Lemma \ref{lem:sourceorderup} gives us that $|s_j(\x)|>|s_k(\x)|$. On the other hand, since $k \leftarrow i$ is introduced before $j \leftarrow i$, Lemma \ref{lem:sourceorderdown} gives us that $|s_k(\x)|>|s_j(\x)|$. So this cannot happen.

Case 2: There is a downish edge $jk$. Suppose, by way of contradiction, that $\Delta$ violates the downish part of the source-sink condition. Then $\Delta$ must have an up edge $i \rightarrow j$ and a  down edge $i \leftarrow k$. Since $i<j$, Lemma \ref{lem:sourceorderup} gives us that $|s_i(\x)|>|s_j(\x)|$. In particular, when $y_i=0$, $y_j>0$, so when $y_i>0, y_j>0$. On the other hand, $k$ is a down feeder for $i$, but since $y_j$ was also positive when $y_i$ was positive, $k$ would have been a down feeder for $j$ during this same step. This contradicts the assumption that $jk$ was downish, so this cannot happen.

Case 3: There is a downish edge $ik$. Suppose, by way of contradiction, that $\Delta$ violates the downish part of the source-sink condition. Then all edges of $\Delta$ are either down or downish, and at least one of  $i\leftarrow j$ or $j\leftarrow k$ is in $P(\x)$.  Suppose  $i\leftarrow j$ is in $P(\x)$.  Then during a down step in the algorithm, both $j$ and $k$ were down feeder candidates for $i$, and $j$ became the feeder for $i$, so by Lemma \ref{lem:sourceorderdown}, $|s_j(\x)|>|s_k(\x)|$, and since $j<k$, by Lemma \ref{lem:sourceorderup}, $\Delta$ has up edge $j \rightarrow k$. This contradicts the assumption that $\Delta$ violates the source-sink condition. Now Suppose $j\leftarrow k$ is in $P(\x)$.  Since $i \nrightarrow j$, by Lemma \ref{lem:sourceorderup}, $|s_i(x)|<|s_j(x)|$. Therefore, during the down step in the algorithm where $k$ was a down feeder for $j$, it would have also fed $i$ as both $y_i$ and $y_j$ were simultaneously positive at this step. This contradicts the assumption that $ik$ is downish. Therefore, this cannot happen either.

With these three cases exhausted, we conclude that $P(\x)$ satisfies the downish part of the source-sink condition. Since $P(\x)$  satisfies the source-sink condition and
$\d(P(\x))=\x-1$, we conclude that $P(\x)$ is a parking graph and the algorithm yields $\phi^{-1}$, and this finishes the proof of Theorem~\ref{thm:bijparkgraphfunction}.
\end{proof}


\section{Closing Remarks}

As mentioned above, the composition $\phi \circ \psi^{ -1 }$ gives a bijection between the regions
of the $n$-dimensional Shi arrangement and the parking functions of length $n$, and the ``forward''
direction of this bijection gives the map introduced by Pak and Stanley
\cite{stanleyshiproceedings}. It is not clear to us how parking graphs interact with the other known
bijection by Athanasiadis and Linusson \cite{ath}.
In the same paper, Athanasiadis and Linusson study an arrangement that is situated between braid and Shi arrangement, namely, the arrangement
\begin{align*}
  x_j - x_k &= 0
  \qquad \text{ for all } 1 \le j < k \le n \, , \\
  x_j - x_k &= 1
  \qquad \text{ for all } 1 \le j < k \le n \text{ with } jk \in E \, ,
\end{align*}
where $E$ is the edge set of a given fixed graph on $n$ vertices (see also
\cite{athanasiadisfinitefieldmethod}). It would be interesting if parking graphs could shed any further light upon these arrangements.

Finally, it could be interesting to study possible connections of this work with the chromatic theory for gain graphs initiated by
Berthom\'e, Cordovil, Forge, Ventos, and Zaslavsky; see, in particular, \cite[Section~8]{berthomeetal}.



\bibliographystyle{amsplain}  
\bibliography{MyBibliography}  

\providecommand{\bysame}{\leavevmode\hbox to3em{\hrulefill}\thinspace}
\providecommand{\MR}{\relax\ifhmode\unskip\space\fi MR }
\providecommand{\MRhref}[2]{%
  \href{http://www.ams.org/mathscinet-getitem?mr=#1}{#2}
}
\providecommand{\href}[2]{#2}
\begin{thebibliography}{10}

\bibitem{athanasiadisfinitefieldmethod}
Christos~A. Athanasiadis, \emph{Characteristic polynomials of subspace
  arrangements and finite fields}, Adv. Math. \textbf{122} (1996), no.~2,
  193--233.

\bibitem{ath}
Christos~A. Athanasiadis and Svante Linusson, \emph{A simple bijection for the
  regions of the {S}hi arrangement of hyperplanes}, Discrete Math. \textbf{204}
  (1999), no.~1-3, 27--39.

\bibitem{backman}
Spencer Backman, \emph{Riemann--{R}och theory for graph orientations}, preprint
  ({\tt arXiv:1401.3309}).

\bibitem{bakernorine}
Matthew Baker and Serguei Norine, \emph{Riemann--{R}och and {A}bel--{J}acobi
  theory on a finite graph}, Adv. Math. \textbf{215} (2007), no.~2, 766--788.

\bibitem{berthomeetal}
Pascal Berthom{\'e}, Raul Cordovil, David Forge, V{\'e}ronique Ventos, and
  Thomas Zaslavsky, \emph{An elementary chromatic reduction for gain graphs and
  special hyperplane arrangements}, Electron. J. Combin. \textbf{16} (2009),
  no.~1, Research Paper 121 (31 pages).

\bibitem{orientedmatroids}
Anders Bj{\"o}rner, Michel Las~Vergnas, Bernd Sturmfels, Neil White, and
  G{\"u}nter~M. Ziegler, \emph{Oriented matroids}, second ed., Encyclopedia of
  Mathematics and its Applications, vol.~46, Cambridge University Press,
  Cambridge, 1999.

\bibitem{dotsenko}
Vladimir Dotsenko, \emph{Parking functions and vertex operators}, Selecta Math.
  (N.S.) \textbf{14} (2009), no.~2, 229--245.

\bibitem{foatariordan}
Dominique Foata and John Riordan, \emph{Mappings of acyclic and parking
  functions}, Aequationes Math. \textbf{10} (1974), 10--22.

\bibitem{haimanparking}
Mark~D. Haiman, \emph{Conjectures on the quotient ring by diagonal invariants},
  J. Algebraic Combin. \textbf{3} (1994), no.~1, 17--76.

\bibitem{hararypalmer}
Frank Harary and Ed~Palmer, \emph{Enumeration of mixed graphs}, Proc. Amer.
  Math. Soc. \textbf{17} (1966), 682--687.

\bibitem{hattori}
Akio Hattori, \emph{Topology of {$C^{n}$} minus a finite number of affine
  hyperplanes in general position}, J. Fac. Sci. Univ. Tokyo Sect. IA Math.
  \textbf{22} (1975), no.~2, 205--219.

\bibitem{headley}
Patrick Headley, \emph{On a family of hyperplane arrangements related to the
  affine {W}eyl groups}, J. Algebraic Combin. \textbf{6} (1997), no.~4,
  331--338.

\bibitem{holroydlevinemeszarosetal}
Alexander~E. Holroyd, Lionel Levine, Karola M{\'e}sz{\'a}ros, Yuval Peres,
  James Propp, and David~B. Wilson, \emph{Chip-firing and rotor-routing on
  directed graphs}, In and out of equilibrium. 2, Progr. Probab., vol.~60,
  Birkh\"auser, Basel, 2008, pp.~331--364.

\bibitem{hopkinsperkinson}
Sam Hopkins and David Perkinson, \emph{Bigraphical arrangements}, preprint
  ({\tt arXiv:1212.4398}), to appear in Trans. Amer. Math. Soc.

\bibitem{konheimweiss}
Alan~G. Konheim and Benjamin Weiss, \emph{An occupancy discipline and
  applications}, SIAM J. Applied Math. \textbf{14} (1966), 1266--1274.

\bibitem{kreweras}
Germain Kreweras, \emph{Une famille de polyn\^omes ayant plusieurs
  propri\'et\'es \'enumeratives}, Period. Math. Hungar. \textbf{11} (1980),
  no.~4, 309--320.

\bibitem{novellithibon}
Jean-Christophe Novelli and Jean-Yves Thibon, \emph{Hopf algebras and
  dendriform structures arising from parking functions}, Fund. Math.
  \textbf{193} (2007), no.~3, 189--241.

\bibitem{orliksolomon}
Peter Orlik and Louis Solomon, \emph{Combinatorics and topology of complements
  of hyperplanes}, Invent. Math. \textbf{56} (1980), no.~2, 167--189.

\bibitem{orlikterao}
Peter Orlik and Hiroaki Terao, \emph{Arrangements of {H}yperplanes},
  Grundlehren der Mathematischen Wissenschaften [Fundamental Principles of
  Mathematical Sciences], vol. 300, Springer-Verlag, Berlin, 1992.

\bibitem{pruefer}
Heinz Pr{\"u}fer, \emph{Neuer {B}eweis eines {S}atzes \"uber {P}ermutationen},
  Arch. Math. Phys. \textbf{27} (1918), 742--744.

\bibitem{pyke}
Ronald Pyke, \emph{The supremum and infimum of the {P}oisson process}, Ann.
  Math. Statist. \textbf{30} (1959), 568--576.

\bibitem{saito}
Kyoji Saito, \emph{Theory of logarithmic differential forms and logarithmic
  vector fields}, J. Fac. Sci. Univ. Tokyo Sect. IA Math. \textbf{27} (1980),
  no.~2, 265--291.

\bibitem{schutzenbergerparking}
Marcel-Paul Sch{\"u}tzenberger, \emph{On an enumeration problem}, J.
  Combinatorial Theory \textbf{4} (1968), 219--221.

\bibitem{shi}
Jian~Yi Shi, \emph{Sign types corresponding to an affine {W}eyl group}, J.
  London Math. Soc. (2) \textbf{35} (1987), no.~1, 56--74.

\bibitem{stanleyshiproceedings}
Richard~P. Stanley, \emph{Hyperplane arrangements, interval orders, and trees},
  Proc. Nat. Acad. Sci. U.S.A. \textbf{93} (1996), no.~6, 2620--2625.

\bibitem{stanleyhyparr}
\bysame, \emph{An introduction to hyperplane arrangements}, Geometric
  {C}ombinatorics, IAS/Park City Math. Ser., vol.~13, Amer. Math. Soc.,
  Providence, RI, 2007, pp.~389--496.

\bibitem{zaslavskythesis}
Thomas Zaslavsky, \emph{Facing up to {A}rrangements: {F}ace-count {F}ormulas
  for {P}artitions of {S}pace by {H}yperplanes}, Mem. Amer. Math. Soc.
  \textbf{154} (1975).

\end{thebibliography}

\end{document}